\documentclass[11pt,reqno]{article}
\usepackage{authblk}
\usepackage{graphicx} 
\usepackage[margin=1in]{geometry}
\usepackage{xcolor}
\usepackage{natbib}
\usepackage[utf8]{inputenc}
\usepackage{amsmath, amssymb, amsthm}
\usepackage{hyperref}
\usepackage{cleveref}
\usepackage{enumitem}

\newtheorem{theorem}{Theorem}[section]
\newtheorem{lemma}[theorem]{Lemma}

\title{An Improved Last-Iterate Convergence Rate for \\ Anchored Gradient Descent Ascent}

\author{Anja Surina}
\author{Arun Suggala}
\author{George Tsoukalas}
\author{Anton Kovsharov}
\author{Sergey Shirobokov}
\author{Francisco J.\ R.\ Ruiz}
\author{Pushmeet Kohli}
\author{Swarat Chaudhuri}
\affil{Google DeepMind}

\date{}

\begin{document}

\maketitle

\begin{abstract}
We analyze the last-iterate convergence of the Anchored Gradient Descent Ascent algorithm for smooth convex-concave min-max problems. While previous work established a last-iterate rate of $\mathcal{O}(1/t^{2-2p})$ for the squared gradient norm, where $p \in (1/2, 1)$, it remained an open problem whether the improved exact $\mathcal{O}(1/t)$ rate is achievable. In this work,  we resolve this question in the affirmative. This result was discovered autonomously by an AI system capable of writing formal proofs in Lean. The Lean proof can be accessed \href{https://github.com/google-deepmind/formal-conjectures/pull/3675/commits/a13226b49fd3b897f4c409194f3bcbeb96a08515}{at this link}.
\end{abstract}

\section{Introduction}
In this work, we study the problem of finding Nash equilibria of min-max games of the form $$\min_x \max_y L(x, y),$$ where $L(\cdot, y): \mathbb{R}^n \to \mathbb{R}$ is convex and $L(x, \cdot): \mathbb{R}^m \to \mathbb{R}$ is concave.
Min-max games arise naturally across game theory, machine learning, and statistics, with practical applications ranging from training generative adversarial networks \citep{goodfellow2014generative} to adversarial robustness \citep{madry2018towards}, reinforcement learning \citep{du2017stochastic}, optimal transport \citep{pmlr-v84-alvarez-melis18a}, and fairness in classification \citep{agarwal2018reductions}.

One of the most popular algorithms for solving such problems is Gradient Descent Ascent (GDA).
At each step $t$, the algorithm updates the current value of the variables ($x_t$, $y_t$) following the steepest descent (ascent) direction, i.e.,
$$x_{t+1} = x_t - \alpha_t \nabla_x L(x_t, y_t), \ y_{t+1} = y_t + \alpha_t \nabla_y L(x_t, y_t),$$
for some step size parameter $\alpha_t > 0$, typically set to the same value for both variables $x$ and $y$.
However, with non-negative step-sizes, GDA is known to exhibit oscillatory behavior, where the updates do not lead to convergence of $x$ and $y$, even in simple bilinear games \citep{daskalakis2018training, shugart2025negative}.
To address this instability, various algorithmic modifications have been proposed, such as the Extragradient method \citep{korpelevich1976extragradient, hsieh2019convergence, cai2022tight} and the Optimistic Gradient Descent Ascent algorithm \citep{popov1980modification, golowich2020tight, cai2022accelerated, gorbunov2022last}. While effective in the absence of stochasticity, these methods can be unstable in practical machine learning settings where we only have access to noisy stochastic gradients---often requiring expensive double-sampling to maintain unbiased estimates or suffering from noise accumulation.

Recently, Anchored GDA \citep{ryu2019ode} has emerged as a compelling \emph{single-call} alternative, providing natural variance reduction and stability in stochastic settings.
Anchoring finds its heritage in Halpern iteration \citep{Halpern1967FixedPO} and modifies standard GDA by adding a term that pulls the current iterate toward a fixed ``anchor'' (often the starting point $(x_0, y_0)$):
$$x_{t+1} = x_t - \alpha_t \nabla_x L(x_t, y_t) + \beta_t(x_0 - x_t),\hspace{8pt}\  y_{t+1} = y_t + \alpha_t \nabla_y L(x_t, y_t) + \beta_t(y_0 - y_t),$$
where $\beta_t$ is an anchoring parameter.
While our work focuses on single-call Anchoring GDA, anchoring lineage also includes algorithms that require multiple gradient evaluations per iteration \citep{pmlr-v125-diakonikolas20a, yoon2021accelerated, lee2021fast, lee2021semi, JMLR:v25:22-1126}.

For a training horizon of $t$ iterations, \citet{ryu2019ode} established last-iterate convergence of Anchored GDA (see their Theorem~3), showing a rate of $\mathcal{O}(1/t^{2-2p})$ in the squared gradient norm for $p \in (1/2, 1)$, which is strictly slower than $\mathcal{O}(1/t)$. The case $p = 1/2$ is not covered by their analysis, leaving open the following question: \emph{can Anchored GDA achieve the $\mathcal{O}(1/t)$ convergence rate under the  standard assumptions, or is the rate fundamentally slower, for instance $\mathcal{O}(\log^c t / t)$ for some constant $c > 0$}? 
In this work, we use an AI system to resolve this question by proving that the algorithm does indeed achieve the $\mathcal{O}(1/t)$ convergence rate (see \Cref{sec:ai} for more information on AI contributions to the effort).

\section{Preliminaries}\label{sec:prelim}
In this section, we recapitulate the Anchored GDA algorithm \citep{ryu2019ode}.

\paragraph{Notation.} Let  $\mathcal{Z} = \mathbb{R}^n \times \mathbb{R}^m$. Let $L : \mathcal{Z} \to \mathbb{R}$ be a continuously differentiable objective function. The goal is to find a saddle point $z^* = (x^*, y^*) \in \mathcal{Z}$ such that:
\[
    L(x^*, y) \leq L(x^*, y^*) \leq L(x, y^*) \quad \forall x \in \mathbb{R}^n, y \in \mathbb{R}^m.
\]
The gradient operator $G : \mathcal{Z} \to \mathcal{Z}$ is given by:
\[
    G(z) = (\nabla_x L(z), -\nabla_y L(z)).
\]
Let $\|\cdot\|$ denote the $L_2$ norm.

\paragraph{Assumptions.} Throughout this paper, we assume the following:
\begin{enumerate}
    \item \textit{Monotonicity}: the operator $G$ satisfies $\langle G(z) - G(w), z - w \rangle \geq 0$ for all $z, w \in \mathcal{Z}$.
    
    \item \textit{Smoothness (Lipschitz continuity)}: there exists a constant $K > 0$ such that $\|G(z) - G(w)\| \leq K \|z - w\|$ for all $z, w \in \mathcal{Z}$.
    
    \item \textit{Existence of a saddle point}: there exists at least one solution $z^* \in \mathcal{Z}$ such that $G(z^*) = 0$.
\end{enumerate}

\paragraph{Algorithm:} The Anchored GDA update rule modifies the standard gradient step with an anchoring term. Starting from an initial point $z_0 \in Z$, the update rule for iterates $z_t = (x_t, y_t)$ is defined as:
\[
    z_{t+1} = z_t - \alpha_t G(z_t) + \beta_t(z_0 - z_t).
\]
To prove the convergence rate of $\mathcal{O}(1/t^{2-2p})$ for $p \in (1/2, 1)$, \citet{ryu2019ode} chose the parameter schedules as
\[
    \alpha_t = \frac{1-p}{(t + 1)^p}, \quad \beta_t = \frac{(1 - p) \gamma}{t + 1},
\]
with $\gamma \ge 2$.

\section{Main Result}

To achieve the $\mathcal{O}(1/t)$ convergence rate of Anchored GDA, we set different parameter schedules than those considered by \citet{ryu2019ode}:
\[
    \alpha_t = \frac{1}{K\sqrt{t + \gamma}}, \quad \beta_t = \frac{\gamma}{t + \gamma},
\]
with $\gamma \ge 2$. 

\begin{theorem}[Last-Iterate Convergence] \label{thm:main}
    Under the assumptions outlined in \Cref{sec:prelim} and the parameter schedules described above, for all $t \ge 1$, the squared gradient norm of the last iterate satisfies:
    \[
        \|G(z_t)\|^2 \le \frac{C}{t},
    \]
    where $C= K^2(E + \gamma D)^2$, $D = (\sqrt{12}+1)\|z_0 - z^\star\|$, $K$ is the Lipschitz constant, and $E \ge 0$ is a constant depending on $\gamma$, $\|z_0 - z^\star\|$ and $\|z_2 - z_1\|$ (see Lemma~\ref{lem:diff_contraction} for precise definition). 
\end{theorem}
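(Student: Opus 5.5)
The plan has three steps, mirroring the shape of the constant $C = K^2(E+\gamma D)^2$. First, a uniform bound on the iterates shows $\|z_t - z^\star\| \le D$ for all $t$, up to the constant. Second, a contraction estimate for consecutive differences, the content of Lemma~\ref{lem:diff_contraction}, shows $\|z_{t+1}-z_t\| \le E/(t+\gamma)$ or a comparable rate. Third, the update rule is inverted to express $G(z_t)$ through these quantities. That gives
\[
\alpha_t G(z_t) = (z_t - z_{t+1}) + \beta_t (z_0 - z_t),
\qquad\text{hence}\qquad
\|G(z_t)\| \le K\sqrt{t+\gamma}\,\Bigl(\|z_{t+1}-z_t\| + \tfrac{\gamma}{t+\gamma}\|z_0-z_t\|\Bigr).
\]
With $\|z_0 - z_t\| \le \|z_0-z^\star\| + \|z_t - z^\star\| \le D$ and $\|z_{t+1}-z_t\|\le E/(t+\gamma)$, this becomes $\|G(z_t)\| \le K(E+\gamma D)/\sqrt{t+\gamma}$. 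Squaring and using $t+\gamma\ge t$ gives $C/t$.

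\textbf{Boundedness.} Expand $\|z_{t+1}-z^\star\|^2$, writing $z_{t+1}-z^\star = (1-\beta_t)(z_t - z^\star) + \beta_t(z_0-z^\star) - \alpha_t G(z_t)$. Monotonicity with $G(z^\star)=0$ gives $\langle G(z_t), z_t - z^\star\rangle \ge 0$. Lipschitzness gives $\|G(z_t)\|\le K\|z_t-z^\star\|$, so the quadratic term $\alpha_t^2\|G(z_t)\|^2 \le \|z_t-z^\star\|^2/(t+\gamma)$ is dominated by the anchoring contraction $(1-\beta_t)^2 \approx 1 - 2\gamma/(t+\gamma)$ once $\gamma\ge 2$. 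The cross term with $z_0 - z^\star$ is handled by Young's inequality. An induction then bounds $\|z_t - z^\star\|$ by a multiple of $\|z_0-z^\star\|$. I expect the constant $\sqrt{12}$ to come from this bookkeeping, giving $\|z_t-z^\star\|\le\sqrt{12}\|z_0-z^\star\|$ and hence $\|z_0-z_t\| \le D$.

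\textbf{Difference contraction (main obstacle).} Let $d_t = z_{t+1}-z_t$ and subtract consecutive updates:
\[
d_{t} = (1-\beta_t)d_{t-1} - \alpha_t\bigl(G(z_t)-G(z_{t-1})\bigr) - (\alpha_t-\alpha_{t-1})G(z_{t-1}) - (\beta_t-\beta_{t-1})(z_{t-1}-z_0).
\]
Squaring the first two terms together, monotonicity kills the cross term and Lipschitzness bounds the rest, giving a factor of roughly $(1-\beta_t)^2 + \alpha_t^2K^2 \le 1 - (2\gamma-1)/(t+\gamma) + O(1/t^2)$ on $\|d_{t-1}\|^2$.

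The perturbation terms are $O(t^{-3/2}\cdot t^{-1/2}) = O(t^{-2})$ and $O(t^{-2})$. This uses $|\alpha_t-\alpha_{t-1}|=O(t^{-3/2})$, $\|G(z_{t-1})\|\le KD'$, $|\beta_t-\beta_{t-1}|=O(t^{-2})$, and boundedness.

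This yields a recursion $\|d_t\| \le (1 - c/(t+\gamma))\|d_{t-1}\| + c'/(t+\gamma)^2$ with $c \ge \gamma - 1/2 > 1$. A standard induction then gives $\|d_t\|\le E/(t+\gamma)$, with $E$ chosen large enough to cover the base case $\|z_2-z_1\|$, which is why $E$ depends on it. The delicate point is verifying $c>1$ strictly, so that the $1/t$ decay closes. Getting this without losing constants, and checking the small-$t$ regime where $(1-\beta_t)$ can be nonpositive (e.g. $t=0$), is where I expect the real work. I would start the induction at $t=1$ and absorb early steps into $E$.
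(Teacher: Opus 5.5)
Your three-step architecture matches the paper's, and the boundedness step and the final inversion of the update rule (including $t+\gamma\ge t$) are fine. The gap is in the difference-contraction step, in the term $(\alpha_{t-1}-\alpha_t)G(z_{t-1})$ of your expansion of $d_t$. Here $\alpha_{t-1}-\alpha_t\approx\frac{1}{2K}(t+\gamma)^{-3/2}$, and the bound you invoke is $\|G(z_{t-1})\|\le K\|z_{t-1}-z^\star\|\le KD'$. So this term is of order $(t+\gamma)^{-3/2}$, not $(t+\gamma)^{-2}$. The extra factor $t^{-1/2}$ in your ``$O(t^{-3/2}\cdot t^{-1/2})$'' would require $\|G(z_{t-1})\|=O(t^{-1/2})$, which is the statement being proved. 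With a forcing of order $(t+\gamma)^{-3/2}$, the recursion only gives $\|d_t\|=O(t^{-1/2})$. Then $\|G(z_t)\|\le K\sqrt{t+\gamma}\,\|d_t\|+\dots$ is merely $O(1)$, so the argument as written yields no rate.

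The missing idea is to eliminate $G(z_{t-1})$ with the update rule \emph{before} taking norms. From $\alpha_{t-1}G(z_{t-1})=-d_{t-1}+\beta_{t-1}(z_0-z_{t-1})$,
\[
(\alpha_{t-1}-\alpha_t)G(z_{t-1}) = -\frac{\alpha_{t-1}-\alpha_t}{\alpha_{t-1}}\,d_{t-1}+\frac{(\alpha_{t-1}-\alpha_t)\beta_{t-1}}{\alpha_{t-1}}\,(z_0-z_{t-1}).
\]
The second piece is a genuine $O((t+\gamma)^{-2})D$ error. The first merges with $(1-\beta_t)d_{t-1}$ into the coefficient $A=\alpha_t/\alpha_{t-1}-\beta_t$; this is exactly Lemma~\ref{lem:diff_expansion}, up to an index shift. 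Because that piece has a favorable sign, it \emph{adds} about $1/(2(t+\gamma))$ of contraction. Hence $\sqrt{A^2+\alpha_t^2K^2}\approx 1-\gamma/(t+\gamma)$, the effective constant is about $\gamma\ge 2>1$, and the $E/(t+\gamma)$ induction closes.

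The sign genuinely matters. Suppose you instead rescued your version by strong induction, inserting $\|G(z_{t-1})\|\le\|d_{t-1}\|/\alpha_{t-1}+\beta_{t-1}D/\alpha_{t-1}$ and bounding the term by the triangle inequality. That would \emph{cost} $1/(2(t+\gamma))$ instead, leaving an effective constant of about $\gamma-1$. At $\gamma=2$ this equals $1$, and $x_t\le(1-\tfrac{1}{t+\gamma})x_{t-1}+\tfrac{c'}{(t+\gamma)^2}$ only gives $x_t=O(\log t/t)$. That yields $\|G(z_t)\|^2=O(\log^2 t/t)$, which is precisely the slower rate the paper rules out.
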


\noindent The proof holds for any saddle point $z^\star$ satisfying $G(z^\star) = 0$; in particular, the result does not require uniqueness. The proof of Theorem~\ref{thm:main} proceeds in three steps:
\begin{itemize}
    \item \textit{Boundedness of iterates:} Bounding the distance of the iterates to the saddle point $z^\star$.
    \item \textit{Iterate stability:} Bounding the differences between consecutive iterates.
    \item \textit{Final convergence rate:} Establishing the $\mathcal{O}(1/t)$ rate for the squared gradient norm.
\end{itemize}

Prior convergence guarantees \citep{ryu2019ode} were derived by modeling the algorithm as a continuous-time ordinary differential equation (ODE). In contrast, our proof departs from the continuous-time analysis by directly analyzing the discrete dynamics and establishing a recurrence relation for consecutive iterate differences.

\subsection{Step 1: Bounded Iterates}
We first establish that the iterates remain within a bounded distance from the optimum $z^\star$.
\begin{lemma} \label{lem:one_step}
    The squared distance to any optimum $z^\star$ satisfies:
    \[
        \|z_{t+1} - z^\star\|^2 \le (1 - \beta_t + 1.5 \alpha_t^2 K^2) \|z_t - z^\star\|^2 + (\beta_t + 2\beta_t^2) \|z_0 - z^\star\|^2.
    \]
\end{lemma}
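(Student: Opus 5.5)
The plan is to expand the update around $z^\star$ directly. Write $u = z_t - z^\star$, $w = z_0 - z^\star$ and $g = G(z_t)$. Since $G(z^\star)=0$, the update rule gives
\[
z_{t+1} - z^\star = (1-\beta_t)u + \beta_t w - \alpha_t g .
\]
The two assumptions enter through two facts about $g$:
\[
\langle g, u\rangle = \langle G(z_t)-G(z^\star), z_t - z^\star\rangle \ge 0
\quad\text{(monotonicity)},\qquad
\|g\| \le K\|u\|
\quad\text{(Lipschitz continuity)}.
\]
We will also use $0 \le \beta_t = \gamma/(t+\gamma) \le 1$ for all $t\ge 0$.

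Expanding the square gives
\[
\|z_{t+1}-z^\star\|^2 = \|(1-\beta_t)u+\beta_t w\|^2 - 2\alpha_t(1-\beta_t)\langle g,u\rangle - 2\alpha_t\beta_t\langle g,w\rangle + \alpha_t^2\|g\|^2 .
\]
We bound the four terms in turn.
\begin{itemize}
\item \emph{Anchor term.} By convexity of $\|\cdot\|^2$ (equivalently, the identity $\|(1-\beta)u+\beta w\|^2 = (1-\beta)\|u\|^2+\beta\|w\|^2-\beta(1-\beta)\|u-w\|^2$), it is at most $(1-\beta_t)\|u\|^2 + \beta_t\|w\|^2$.
\item \emph{Monotone cross term.} The term $-2\alpha_t(1-\beta_t)\langle g,u\rangle$ is nonpositive because $1-\beta_t\ge 0$ and $\langle g,u\rangle \ge 0$, so we drop it.
\item \emph{Gradient term.} The last term is at most $\alpha_t^2K^2\|u\|^2$.
\end{itemize}

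The only delicate step is the remaining cross term $-2\alpha_t\beta_t\langle g, w\rangle$, since it mixes the gradient with the anchor. By Cauchy--Schwarz and Lipschitz continuity it is at most $2(\alpha_t K\|u\|)(\beta_t\|w\|)$. We then apply Young's inequality in the form $2ab \le \tfrac12 a^2 + 2b^2$, which bounds it by $\tfrac12\alpha_t^2K^2\|u\|^2 + 2\beta_t^2\|w\|^2$. The weights $\tfrac12$ and $2$ are chosen precisely so that the constants $1.5$ and $2$ in the statement come out.

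Summing the four bounds gives
\[
\|z_{t+1}-z^\star\|^2 \le (1-\beta_t+1.5\alpha_t^2K^2)\|u\|^2 + (\beta_t+2\beta_t^2)\|w\|^2,
\]
which is the claim. No restriction on $\alpha_t$ is needed beyond positivity. The condition $\beta_t\le 1$ (including $\beta_0=1$) is needed only to drop the monotone term and to apply convexity.
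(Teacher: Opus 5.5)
Your proof is correct and follows the paper's argument essentially step for step. Both use the decomposition $z_{t+1}-z^\star=(1-\beta_t)(z_t-z^\star)+\beta_t(z_0-z^\star)-\alpha_t G(z_t)$, convexity for the anchor term, monotonicity to drop the $\langle G(z_t),z_t-z^\star\rangle$ term, and a Young-type bound with weights $\tfrac12$ and $2$ on the mixed term. The only cosmetic difference is that the paper gets that bound by expanding $\|2\beta_t(z_0-z^\star)+\alpha_t G(z_t)\|^2\ge 0$ and applies Lipschitz continuity once at the end, whereas you apply Cauchy--Schwarz and Lipschitz first and then scalar Young's inequality, which gives identical constants.
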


\begin{proof}
    Let $u = z_t - z^\star$ and $v = z_0 - z^\star$.
    From the algorithm update rule, we have $z_{t+1} - z^\star = z_{t} - \alpha_t G(z_t)+ \beta_t (z_0 - z_t) - z^* = w - \alpha_t G(z_t)$, where $w = (1-\beta_t)u + \beta_t v$. Taking the squared norm gives:
    \[
        \|z_{t+1} - z^\star\|^2 = \|w\|^2 - 2\alpha_t \langle w, G(z_t) \rangle + \alpha_t^2 \|G(z_t)\|^2.
    \]
    Expanding the cross term yields:
    \[
        - 2\alpha_t \langle w, G(z_t) \rangle = -2\alpha_t(1-\beta_t)\langle u, G(z_t) \rangle - 2\alpha_t\beta_t\langle v, G(z_t)\rangle.
    \]
    By monotonicity, $\langle u, G(z_t) \rangle \ge 0$, so the first term is non-positive. For the second term, we start with the non-negativity of the squared norm of the vector $2\beta_t v + \alpha_t G(z_t)$. Expanding $\|2\beta_t v + \alpha_t G(z_t)\|^2 \ge 0$ yields $-4\alpha_t\beta_t \langle v, G(z_t) \rangle \le 4\beta_t^2 \|v\|^2 + \alpha_t^2 \|G(z_t)\|^2$, which proves
    \[
    -2\alpha_t\beta_t\langle v, G(z_t)\rangle \le 2\beta_t^2\|v\|^2 + 0.5\alpha_t^2\|G(z_t)\|^2.
    \] 
Finally, using the convexity of the squared norm we can expand $\|w\|^2 \le (1-\beta_t)\|u\|^2 + \beta_t\|v\|^2$. Bounding $\|G(z_t)\|^2 \le K^2\|u\|^2$ using Lipschitz continuity of $G$, and summing the terms completes the proof.
\end{proof}

\begin{lemma} \label{lem:bounded_iterates}
    For all $t \ge 0$, and any optimum $z^\star$, the sequence satisfies $$\|z_t - z^\star\|^2 \le 12 \|z_0 - z^\star\|^2.$$ Consequently, there exists a constant $D = (\sqrt{12} + 1)\|z_0 - z^\star\| \ge 0$ such that $\|z_t - z_0\| \le D$ for all $t$.
\end{lemma}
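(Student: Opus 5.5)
Let $a_t = \|z_t - z^\star\|^2$ and $R = \|z_0 - z^\star\|^2$. We prove $a_t \le 12R$ by induction on $t$, using Lemma~\ref{lem:one_step}. With the schedules $\alpha_t^2 K^2 = 1/(t+\gamma)$ and $\beta_t = \gamma/(t+\gamma)$, that lemma reads
\[
a_{t+1} \le \Bigl(1 - \tfrac{\gamma - 1.5}{t+\gamma}\Bigr) a_t + \Bigl(\tfrac{\gamma}{t+\gamma} + \tfrac{2\gamma^2}{(t+\gamma)^2}\Bigr) R .
\]
Since $\gamma \ge 2$, the contraction coefficient $c_t := (\gamma-1.5)/(t+\gamma)$ satisfies $0 < c_t \le 1$, so $1 - c_t \ge 0$. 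This nonnegativity is what lets us substitute the inductive hypothesis into the right-hand side. The base case $a_0 = R \le 12R$ is trivial.

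For the inductive step, assume $a_t \le 12R$. It suffices to check
\[
12\Bigl(1 - \tfrac{\gamma-1.5}{t+\gamma}\Bigr) + \tfrac{\gamma}{t+\gamma} + \tfrac{2\gamma^2}{(t+\gamma)^2} \le 12 .
\]
Multiplying by $(t+\gamma)$ and using $\gamma/(t+\gamma) \le 1$ to bound the last term by $2\gamma$, the condition becomes $-12(\gamma - 1.5) + \gamma + 2\gamma \le 0$, that is, $18 \le 9\gamma$, i.e.\ $\gamma \ge 2$.

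This worst-case check at $t=0$, where $\beta_0 = 1$ and the quadratic term $2\beta_t^2$ is largest, is the step I expect to be tight. It explains where the constant $12$ comes from. I would verify the arithmetic carefully, including the edge case $\gamma = 2$. If it failed, I would instead handle small $t$ (where $\beta_t$ is close to $1$) separately by direct one-step bounds.

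Finally, the triangle inequality gives
\[
\|z_t - z_0\| \le \|z_t - z^\star\| + \|z^\star - z_0\| \le \sqrt{12}\,\|z_0 - z^\star\| + \|z_0 - z^\star\| = D .
\]
Nonnegativity of $D$ is immediate.
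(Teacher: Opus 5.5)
Your proof is correct and follows the paper's argument. Both proceed by induction through Lemma~\ref{lem:one_step}, reduce the inductive step to $-11\beta_t + 18\alpha_t^2K^2 + 2\beta_t^2 \le 0$, and conclude with the triangle inequality; you verify this condition via $\gamma/(t+\gamma)\le 1$, while the paper checks $18t+18\gamma \le 11\gamma t + 9\gamma^2$. You also state explicitly that the coefficient $1-\beta_t+1.5\alpha_t^2K^2$ is nonnegative, which is needed to substitute the hypothesis and which the paper leaves implicit.
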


\begin{proof}
    We proceed by induction on $t$. Assume $\|z_t - z^\star\|^2 \le 12 \|z_0 - z^\star\|^2$. Substituting this into Lemma \ref{lem:one_step}, we require the multiplier of $\|z_0 - z^\star\|^2$ to be less than or equal to $12$:
    \[
        12(1 - \beta_t + 1.5 \alpha_t^2 K^2) + (\beta_t + 2\beta_t^2) \le 12 \implies -11\beta_t + 18\alpha_t^2 K^2 + 2\beta_t^2 \le 0.
    \]
    Substituting the step size schedules $\alpha_t^2 K^2 = \frac{1}{t+\gamma}$ and $\beta_t = \frac{\gamma}{t+\gamma}$, we get:
    \[
        \frac{-11\gamma}{t+\gamma} + \frac{18}{t+\gamma} + \frac{2\gamma^2}{(t+\gamma)^2} \le 0 \implies -11\gamma(t+\gamma) + 18(t+\gamma) + 2\gamma^2 \le 0.
    \]
    Given our assumption $\gamma \ge 2$, inequality $18t+18\gamma \leq 11 \gamma t + 9 \gamma^2$ holds for any $t \ge 0$ . The bound on the distance to the anchor $\|z_t - z_0\|$ follows from the triangle inequality: $\|z_t - z_0\| \le \|z_t - z^\star\| + \|z^\star - z_0\|$.
\end{proof}

\subsection{Step 2: Bounded Iterate Differences}
Next, we bound the magnitude of consecutive iterate differences $d_t = z_{t+1} - z_t$.

\begin{lemma}\label{lem:diff_expansion}
    The difference between consecutive iterates can be written exactly as:
    \[
        z_{t+2} - z_{t+1} = A \cdot (z_{t+1} - z_t) - \alpha_{t+1}(G(z_{t+1}) - G(z_t)) + E_{\mathrm{err}} \cdot (z_0 - z_t),
    \]
    where $A = 1 - \beta_{t+1} - \frac{\alpha_t - \alpha_{t+1}}{\alpha_t}$ and $E_{\mathrm{err}} = \frac{\alpha_t - \alpha_{t+1}}{\alpha_t}\beta_t + \beta_{t+1} - \beta_t$.
\end{lemma}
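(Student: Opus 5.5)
We will verify the identity by direct algebraic manipulation of the update rule, written at two consecutive steps. The key is to eliminate the gradient $G(z_t)$ in favor of iterate differences, using the step-$t$ update, so that only the gradient difference $G(z_{t+1}) - G(z_t)$ survives with coefficient $\alpha_{t+1}$.

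First, we write the update at steps $t$ and $t+1$:
\[
z_{t+1} - z_t = -\alpha_t G(z_t) + \beta_t (z_0 - z_t), \qquad z_{t+2} - z_{t+1} = -\alpha_{t+1} G(z_{t+1}) + \beta_{t+1}(z_0 - z_{t+1}).
\]
In the second equation we split $-\alpha_{t+1}G(z_{t+1}) = -\alpha_{t+1}(G(z_{t+1}) - G(z_t)) - \alpha_{t+1}G(z_t)$. From the first equation, since $\alpha_t > 0$, we solve $G(z_t) = \frac{1}{\alpha_t}\bigl(\beta_t(z_0 - z_t) - (z_{t+1} - z_t)\bigr)$. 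Substituting this gives
\[
-\alpha_{t+1}G(z_t) = \frac{\alpha_{t+1}}{\alpha_t}(z_{t+1}-z_t) - \frac{\alpha_{t+1}}{\alpha_t}\beta_t (z_0 - z_t).
\]
Next we rewrite the anchor term $\beta_{t+1}(z_0 - z_{t+1}) = \beta_{t+1}(z_0 - z_t) - \beta_{t+1}(z_{t+1} - z_t)$, so that every remaining term is a multiple of either $z_{t+1}-z_t$ or $z_0 - z_t$.

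Collecting terms, the coefficient of $z_{t+1} - z_t$ is $\frac{\alpha_{t+1}}{\alpha_t} - \beta_{t+1} = 1 - \beta_{t+1} - \frac{\alpha_t - \alpha_{t+1}}{\alpha_t} = A$. The coefficient of $z_0 - z_t$ is
\[
\beta_{t+1} - \frac{\alpha_{t+1}}{\alpha_t}\beta_t = \beta_{t+1} - \beta_t + \frac{\alpha_t - \alpha_{t+1}}{\alpha_t}\beta_t = E_{\mathrm{err}}.
\]
The gradient term is exactly $-\alpha_{t+1}(G(z_{t+1}) - G(z_t))$. Together these give the claimed identity.

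There is no real obstacle here; the only point requiring care is the choice of grouping. We express $\alpha_{t+1}/\alpha_t$ as $1 - (\alpha_t - \alpha_{t+1})/\alpha_t$ so that $A$ and $E_{\mathrm{err}}$ match the stated forms. We also anchor the residual at $z_0 - z_t$ rather than $z_0 - z_{t+1}$, which is what makes the $-\beta_{t+1}$ appear inside $A$. No assumptions beyond $\alpha_t \neq 0$ are used, so the identity holds for any step-size schedule.
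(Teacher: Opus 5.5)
Your proposal is correct and matches the paper's proof essentially step for step. Both split off $-\alpha_{t+1}G(z_t)$, eliminate it using the step-$t$ update, rewrite $z_0 - z_{t+1} = (z_0 - z_t) - (z_{t+1} - z_t)$, and regroup the coefficients using $\frac{\alpha_{t+1}}{\alpha_t} = 1 - \frac{\alpha_t - \alpha_{t+1}}{\alpha_t}$ to obtain $A$ and $E_{\mathrm{err}}$.
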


\begin{proof}

    From the algorithm's update rule we have:
    \[
        z_{t+2} = z_{t+1} - \alpha_{t+1} G(z_{t+1}) + \beta_{t+1} (z_0 - z_{t+1}).
    \]
    Rearranging and introducing the gradient difference yields
    \[
        z_{t+2} - z_{t+1} = - \alpha_{t+1} (G(z_{t+1}) - G(z_t)) - \alpha_{t+1} G(z_t) + \beta_{t+1} (z_0 - z_{t+1}).
    \]
    Substituting $G(z_t)$ in the expression with $\frac{1}{\alpha_t} (z_t - z_{t+1}) + \frac{\beta_t}{\alpha_t} (z_0 - z_t)$ derived from the update rule $z_{t+1} = z_t - \alpha_t G(z_t) + \beta_t (z_0 - z_t)$, and rewriting $z_0 - z_{t+1} = (z_0 - z_t) - (z_{t+1} - z_t)$, we obtain:
    \begin{align*}
        z_{t+2} - z_{t+1} &= - \alpha_{t+1} (G(z_{t+1}) - G(z_t)) + \frac{\alpha_{t+1}}{\alpha_t} (z_{t+1} - z_t) - \frac{\alpha_{t+1} \beta_t}{\alpha_t} (z_0 - z_t) \\
        &\quad + \beta_{t+1} (z_0 - z_t) - \beta_{t+1} (z_{t+1} - z_t).
    \end{align*}
    Grouping the terms by $(z_{t+1} - z_t)$ and $(z_0 - z_t)$ gives:
    \[
        z_{t+2} - z_{t+1} = \left( \frac{\alpha_{t+1}}{\alpha_t} - \beta_{t+1} \right) (z_{t+1} - z_t) - \alpha_{t+1} (G(z_{t+1}) - G(z_t)) + \left( \beta_{t+1} - \frac{\alpha_{t+1}}{\alpha_t} \beta_t \right) (z_0 - z_t).
    \]
    Using the identity $\frac{\alpha_{t+1}}{\alpha_t} = 1 - \frac{\alpha_t - \alpha_{t+1}}{\alpha_t}$, the coefficient of $(z_{t+1} - z_t)$ evaluates exactly to $A$:
    \[
        \frac{\alpha_{t+1}}{\alpha_t} - \beta_{t+1} = 1 - \beta_{t+1} - \frac{\alpha_t - \alpha_{t+1}}{\alpha_t} = A.
    \]
    Applying the same identity to the coefficient of $(z_0 - z_t)$ gives $E_{\mathrm{err}}$:
    \[
        \beta_{t+1} - \left( 1 - \frac{\alpha_t - \alpha_{t+1}}{\alpha_t} \right) \beta_t = \frac{\alpha_t - \alpha_{t+1}}{\alpha_t} \beta_t + \beta_{t+1} - \beta_t = E_{\mathrm{err}}.
    \]
\end{proof}

\begin{lemma} \label{lem:diff_contraction}
    There exists a constant $E \ge 0$ such that for all $t \ge 1$:
    \[
        \|z_{t+1} - z_t\| \le \frac{E}{t+\gamma}.
    \]
\end{lemma}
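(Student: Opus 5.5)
Starting from the exact expansion in Lemma~\ref{lem:diff_expansion}, I will set $d_t = z_{t+1}-z_t$ and derive a scalar recursion for $\|d_t\|$ that contracts at rate roughly $1-\gamma/(t+\gamma)$, driven by a forcing term of order $1/(t+\gamma)^2$.

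\textbf{Bounding the linear part.} The key step is to control $\|A d_t - \alpha_{t+1}(G(z_{t+1})-G(z_t))\|$. I will expand the square. Monotonicity gives $\langle d_t, G(z_{t+1})-G(z_t)\rangle \ge 0$, so when $A\ge 0$ the cross term is nonpositive and can be dropped. Lipschitz continuity bounds the remaining term by $\alpha_{t+1}^2K^2\|d_t\|^2 = \|d_t\|^2/(t+1+\gamma)$. This yields
\[
\|A d_t - \alpha_{t+1}(G(z_{t+1})-G(z_t))\| \le \sqrt{A^2 + \tfrac{1}{t+1+\gamma}}\,\|d_t\|.
\]
Now take $s = t+1+\gamma$. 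Then $\beta_{t+1}=\gamma/s$, and $\frac{\alpha_t-\alpha_{t+1}}{\alpha_t} = 1-\sqrt{(s-1)/s} \in [0, 1/s]$ (since $1-\sqrt{1-x}\le x$). Hence $0\le 1-(\gamma+1)/s \le A \le 1-\gamma/s$, and $A\ge0$ because $s\ge \gamma+1$. Therefore
\[
A^2 + 1/s \le 1 - 2\gamma/s + \gamma^2/s^2 + 1/s \le 1-(2\gamma-1)/s + \gamma^2/s^2.
\]
Using $\sqrt{1-x}\le 1-x/2$, this gives a factor of at most $1 - \frac{\gamma - 1/2}{s} + O(\gamma^2/s^2)$. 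The $O(\gamma^2/s^2)$ term is harmless for large $t$ but needs care for small $t$. I would keep the cleaner bound $\sqrt{A^2+1/s}\le 1-c/s$ with some $c>1$ (for instance $c=\gamma-1/2\ge 3/2$) valid for $t\ge t_0$, and absorb the finitely many initial steps into the constant.

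\textbf{Bounding the error term.} Here $|E_{\mathrm{err}}| \le \frac{1}{s}\beta_t + |\beta_{t+1}-\beta_t| \le \frac{\gamma}{s(s-1)} + \frac{\gamma}{s(s-1)} = O(\gamma/s^2)$. By Lemma~\ref{lem:bounded_iterates}, $\|z_0-z_t\|\le D$, so the forcing term is at most $2\gamma D/(s(s-1)) \le 4\gamma D/s^2$.

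\textbf{Closing by induction.} Combining the two bounds, $e_t := \|d_t\|$ satisfies
\[
e_{t+1} \le \left(1-\tfrac{c}{s}\right)e_t + \tfrac{4\gamma D}{s^2}, \qquad s = t+1+\gamma.
\]
I claim $e_t \le E/(t+\gamma)$ by induction. The inductive step requires
\[
\left(1-\tfrac{c}{s}\right)\tfrac{E}{s-1} + \tfrac{4\gamma D}{s^2} \le \tfrac{E}{s}.
\]
Multiplying through by $s^2(s-1)/s$, this reduces to roughly $E(s - c s + c - 1)... $, i.e. it holds whenever $E(c-1)\ge 4\gamma D$ up to lower-order terms, which is where $c>1$ matters. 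I will therefore choose $E = \max\bigl((\gamma+1)\|z_2-z_1\|,\ \text{const}\cdot \gamma D\bigr)$, covering the base case $t=1$ and the inductive step. This is consistent with the theorem's statement that $E$ depends on $\gamma$, $\|z_0-z^\star\|$ and $\|z_2-z_1\|$.

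\textbf{Main obstacle.} The hardest part will be making the contraction factor uniform for all $t\ge1$ rather than only asymptotically. When $\gamma$ is large relative to $s$, the $\gamma^2/s^2$ term is not small. I would first try to use the exact form of $A^2$ together with $s\ge\gamma+2$ to show directly that $A^2 + 1/s \le (1-1.5/s)^2$ for all $s\ge\gamma+2$. If that fails, I would fall back on handling an initial window of steps using the crude bound $\|d_t\|\le 2D$ and enlarging $E$ accordingly.
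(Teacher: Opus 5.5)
Your proposal is correct and is essentially the paper's argument: drop the cross term using $A\ge0$ and monotonicity, bound $\sqrt{A^2+\alpha_{t+1}^2K^2}\le 1-c/(t+1+\gamma)$ with some $c>1$ and $|E_{\mathrm{err}}|=O(\gamma/(t+\gamma)^2)$, then close the induction with $E=\max\bigl((1+\gamma)\|z_2-z_1\|,\ \mathrm{const}\cdot\gamma D\bigr)$. The paper does exactly this with $c=1.15$ and $E=\max(\|d_1\|(1+\gamma),20\gamma D)$.

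On your ``main obstacle'': $c=1.5$ does hold for all $t\ge1$, via the exact identity $A^2+\frac1s=1-\frac{2\gamma}{s}\sqrt{1-\frac1s}+\frac{\gamma^2}{s^2}$. It holds only barely at $\gamma=2,\ t=1$, where the factor is $\approx0.620$ against $0.625$. With the paper's $c=1.15$, by contrast, your crude bounds $0\le A\le1-\gamma/s$ already suffice uniformly, since $(2\gamma-3.3)s\ge\gamma^2-1.3225$ for all $s\ge\gamma+2$, $\gamma\ge2$. So no initial window needs special treatment.
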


\begin{proof}
    Let $d_t = z_{t+1} - z_t$. By Lemma \ref{lem:diff_expansion} and the triangle inequality, we have:
    \[
        \|d_{t+1}\| \le \|A d_t - \alpha_{t+1}(G(z_{t+1}) - G(z_t))\| + |E_{\mathrm{err}}| \|z_0 - z_t\|.
    \]
    Expanding the squared norm of the first term yields:
    \[
        \|A d_t - \alpha_{t+1}(G(z_{t+1}) - G(z_t))\|^2 = A^2 \|d_t\|^2 - 2A\alpha_{t+1}\langle G(z_{t+1}) - G(z_t), d_t \rangle + \alpha_{t+1}^2 \|G(z_{t+1}) - G(z_t)\|^2.
    \]
    By $A \geq 0$ and the monotonicity of G, the cross-term is non-positive. By the Lipschitz continuity of $G$, the squared gradient difference is bounded by $K^2 \|d_t\|^2$. This yields:
    \[
        \|A d_t - \alpha_{t+1}(G(z_{t+1}) - G(z_t))\|^2 \le (A^2 + \alpha_{t+1}^2 K^2) \|d_t\|^2.
    \]
    Taking the square root gives a contraction factor of $\sqrt{A^2 + \alpha_{t+1}^2 K^2}$. Substituting the parameter schedules, we can bound the contraction factor and the error coefficient for $t\geq1$ and $\gamma \geq 2$:
    \[
        \sqrt{A^2 + \alpha_{t+1}^2 K^2} \le 1 - \frac{1.15}{t+1+\gamma}, \quad \text{and} \quad |E_{\mathrm{err}}| \le \frac{\gamma}{(t+\gamma)^2}.
    \]
    In Appendix~\ref{sec:asymptotic_ae}, we provide a asymptotic argument for why the above two inequalities hold. We point the reader to the Lean proof for a detailed derivation of the non-asymptotic bounds. 
    Since $\|z_0 - z_t\| \le D$ by Lemma \ref{lem:bounded_iterates}, we obtain the recurrence relation:
    \[
        \|d_{t+1}\| \le \left(1 - \frac{1.15}{t+1+\gamma}\right) \|d_t\| + \frac{\gamma D}{(t+\gamma)^2}.
    \]
    By induction, this sequence implies the existence of a constant $E \ge 0$ such that $\|d_t\| \le \frac{E}{t+\gamma}$ for all $t \ge 1$. To establish this, let $E = \max(\|d_1\|(1+\gamma), 20\gamma D)$. 
    For the base case $t=1$, the condition is satisfied directly by $E \ge \|d_1\|(1+\gamma)$.
    For the inductive step, assume $\|d_t\| \le \frac{E}{t+\gamma}$. Substituting this into the recurrence relation gives:
    \begin{align*}
        \|d_{t+1} \| &\le \left(1 - \frac{1.15}{t+1+\gamma}\right) \frac{E}{t+\gamma} + \frac{\gamma D}{(t+\gamma)^2} \\
        &= \frac{t+\gamma - 0.15}{t+1+\gamma} \cdot \frac{E}{t+\gamma} + \frac{\gamma D}{(t+\gamma)^2}.
    \end{align*}
    To complete the induction, we must show $\|d_{t+1}\| \le \frac{E}{t+1+\gamma}$. We require:
    \begin{align*}
        \frac{t+\gamma - 0.15}{t+\gamma} E + \frac{\gamma D (t+1+\gamma)}{(t+\gamma)^2} &\le E, \\
        \left(1 - \frac{0.15}{t+\gamma}\right) E + \frac{\gamma D}{t+\gamma}\left(1 + \frac{1}{t+\gamma}\right) &\le E.
    \end{align*}
    Subtracting $\left(1 - \frac{0.15}{t+\gamma}\right) E$ and multiplying by $t+\gamma$ on both sides gives:
    \begin{align*}
        \gamma D \left(1 + \frac{1}{t+\gamma}\right) &\le 0.15 E.
    \end{align*}
    For $t \ge 1$ and $\gamma \ge 2$, we have $t+\gamma \ge 3$, which implies $1 + \frac{1}{t+\gamma} \le \frac{4}{3}$. Thus, setting $E \ge 20 \gamma D$ is sufficient to show this inequality holds, completing the proof.
\end{proof}

\subsection{Step 3: Last-Iterate Convergence}
Finally, we tie the bounded differences back to the gradient norm to establish the $\mathcal{O}(1/t)$ rate.

\begin{proof}[Proof of Theorem \ref{thm:main}]
    By algebraically rearranging the algorithm's update rule $z_{t+1} = z_t - \alpha_t G(z_t) + \beta_t(z_0 - z_t)$, we can isolate the operator evaluated at the last iterate:
    \[
        \alpha_t G(z_t) = (z_t - z_{t+1}) + \beta_t(z_0 - z_t).
    \]
    Taking the norm and dividing by $\alpha_t$ yields:
    \[
        \|G(z_t)\| \le \frac{1}{\alpha_t} \|z_{t+1} - z_t\| + \frac{\beta_t}{\alpha_t} \|z_0 - z_t\|.
    \]
    We now substitute our explicit parameter schedules $\alpha_t = \frac{1}{K\sqrt{t+\gamma}}$ and $\beta_t = \frac{\gamma}{t+\gamma}$:
    \[
        \|G(z_t)\| \le K\sqrt{t+\gamma} \|z_{t+1} - z_t\| + \frac{K\gamma}{\sqrt{t+\gamma}} \|z_0 - z_t\|.
    \]
    From Lemma \ref{lem:diff_contraction}, we have $\|z_{t+1} - z_t\| \le \frac{E}{t+\gamma}$. From Lemma \ref{lem:bounded_iterates}, we have $\|z_0 - z_t\| \le D$. Substituting these bounds into the equation gives:
    \[
        \|G(z_t)\| \le \frac{KE}{\sqrt{t+\gamma}} + \frac{K\gamma D}{\sqrt{t+\gamma}} = \frac{KE + K\gamma D}{\sqrt{t+\gamma}}.
    \]
    Squaring both sides completes the proof:
    \[
        \|G(z_t)\|^2 \le \frac{(KE + K\gamma D)^2}{t+\gamma}.
    \]
\end{proof}

\section{Note on AI Contribution and Formal Verification}\label{sec:ai}
The proof presented in this manuscript was discovered by an AI agent for formal mathematics developed at Google DeepMind. 
We will release the details of the agent, as well as the specific way it was deployed in the present domain, in a later paper. We present a natural language version of the Lean proof for improved readability. Gemini 3.1 Pro was used as an assistant in the presentation and informalization.

\bibliographystyle{apalike}
\bibliography{references}

@article{ryu2019ode,
  title={ODE Analysis of Stochastic Gradient Methods with Optimism and Anchoring for Minimax Problems},
  author={Ryu, Ernest K. and Yuan, Kun and Yin, Wotao},
  journal={arXiv preprint arXiv:1905.10899},
  year={2019}
}

@inproceedings{yoon2021accelerated,
  title={Accelerated algorithms for smooth convex-concave minimax problems with O (1/k\^{} 2) rate on squared gradient norm},
  author={Yoon, TaeHo and Ryu, Ernest K},
  booktitle={International conference on machine learning},
  pages={12098--12109},
  year={2021},
  organization={PMLR}
}

@article{cai2022tight,
  title={Tight Last-Iterate Convergence of the Extragradient and the Optimistic Gradient Descent-Ascent Algorithm for Constrained Monotone Variational Inequalities},
  author={Cai, Yang and Oikonomou, Argyris and Zheng, Weiqiang},
  journal={arXiv preprint arXiv:2204.09228},
  year={2022}
}

@article{golowich2020tight,
  title={Tight last-iterate convergence rates for no-regret learning in multi-player games},
  author={Golowich, Noah and Pattathil, Sarath and Daskalakis, Constantinos},
  journal={arXiv preprint arXiv:2010.13724},
  year={2020}
}

@article{cai2022accelerated,
  title={Accelerated Single-Call Methods for Constrained Min-Max Optimization},
  author={Cai, Yang and Zheng, Weiqiang},
  journal={arXiv preprint arXiv:2210.03096},
  year={2022}
}

@inproceedings{goodfellow2014generative,
  author    = {Goodfellow, Ian and Pouget-Abadie, Jean and Mirza, Mehdi and Xu, Bing and Warde-Farley, David and Ozair, Sherjil and Courville, Aaron and Bengio, Yoshua},
  booktitle = {Advances in Neural Information Processing Systems (NeurIPS)},
  title     = {Generative Adversarial Nets},
  year      = {2014}
}

@inproceedings{madry2018towards,
  title={Towards Deep Learning Models Resistant to Adversarial Attacks},
  author={Madry, Aleksander and Makelov, Aleksandar and Schmidt, Ludwig and Tsipras, Dimitris and Vladu, Adrian},
  booktitle={International Conference on Learning Representations (ICLR)},
  year={2018}
}

@inproceedings{daskalakis2018training,
  title={Training GANs with Optimism},
  author={Daskalakis, Constantinos and Ilyas, Andrew and Syrgkanis, Vasilis and Zeng, Haoyang},
  booktitle={International Conference on Learning Representations (ICLR)},
  year={2018}
}

@article{korpelevich1976extragradient,
  title={The extragradient method for finding saddle points and other problems},
  author={Korpelevich, G. M.},
  journal={Ekonomika i Matematicheskie Metody},
  volume={12},
  number={4},
  pages={747--756},
  year={1976}
}

@article{popov1980modification,
  title={A modification of the Arrow-Hurwicz method for search of saddle points},
  author={Popov, L. D.},
  journal={Mathematical Notes of the Academy of Sciences of the USSR},
  volume={28},
  number={5},
  pages={845--848},
  year={1980}
}

@article{Halpern1967FixedPO,
  title={Fixed points of nonexpanding maps},
  author={Benjamin Halpern},
  journal={Bulletin of the American Mathematical Society},
  year={1967},
  volume={73},
  pages={957-961},
  url={https://api.semanticscholar.org/CorpusID:120539954}
}

@article{lee2021fast,
  title={Fast extra gradient methods for smooth structured nonconvex-nonconcave minimax problems},
  author={Lee, Sucheol and Kim, Donghwan},
  journal={Advances in Neural Information Processing Systems},
  volume={34},
  pages={22588--22600},
  year={2021}
}

@InProceedings{pmlr-v125-diakonikolas20a,
  title = 	 {Halpern Iteration for Near-Optimal and Parameter-Free Monotone Inclusion and Strong Solutions to Variational Inequalities},
  author =       {Diakonikolas, Jelena},
  booktitle = 	 {Proceedings of Thirty Third Conference on Learning Theory},
  pages = 	 {1428--1451},
  year = 	 {2020},
  editor = 	 {Abernethy, Jacob and Agarwal, Shivani},
  volume = 	 {125},
  series = 	 {Proceedings of Machine Learning Research},
  month = 	 {09--12 Jul},
  publisher =    {PMLR},
}

@article{hsieh2019convergence,
  title={On the convergence of single-call stochastic extra-gradient methods},
  author={Hsieh, Yu-Guan and Iutzeler, Franck and Malick, J{\'e}r{\^o}me and Mertikopoulos, Panayotis},
  journal={Advances in Neural Information Processing Systems},
  volume={32},
  year={2019}
}

@article{gorbunov2022last,
  title={Last-iterate convergence of optimistic gradient method for monotone variational inequalities},
  author={Gorbunov, Eduard and Taylor, Adrien and Gidel, Gauthier},
  journal={Advances in neural information processing systems},
  volume={35},
  pages={21858--21870},
  year={2022}
}

@article{lee2021semi,
  title={Semi-anchored multi-step gradient descent ascent method for structured nonconvex-nonconcave composite minimax problems},
  author={Lee, Sucheol and Kim, Donghwan},
  journal={arXiv preprint arXiv:2105.15042},
  year={2021}
}

@article{JMLR:v25:22-1126,
  author  = {Lesi Chen and Luo Luo},
  title   = {Near-Optimal Algorithms for Making the Gradient Small in Stochastic Minimax Optimization},
  journal = {Journal of Machine Learning Research},
  year    = {2024},
  volume  = {25},
  number  = {387},
  pages   = {1--44},
  url     = {http://jmlr.org/papers/v25/22-1126.html}
}

@article{shugart2025negative,
  title={Negative stepsizes make gradient-descent-ascent converge},
  author={Shugart, Henry and Altschuler, Jason M},
  journal={arXiv preprint arXiv:2505.01423},
  year={2025}
}

@inproceedings{agarwal2018reductions,
  title={A reductions approach to fair classification},
  author={Agarwal, Alekh and Beygelzimer, Alina and Dud{\'\i}k, Miroslav and Langford, John and Wallach, Hanna},
  booktitle={International conference on machine learning},
  pages={60--69},
  year={2018},
  organization={PMLR}
}

@inproceedings{du2017stochastic,
  title={Stochastic variance reduction methods for policy evaluation},
  author={Du, Simon S and Chen, Jianshu and Li, Lihong and Xiao, Lin and Zhou, Dengyong},
  booktitle={International conference on machine learning},
  pages={1049--1058},
  year={2017},
  organization={PMLR}
}

@InProceedings{pmlr-v84-alvarez-melis18a,
  title = 	 {Structured Optimal Transport},
  author = 	 {Alvarez-Melis, David and Jaakkola, Tommi and Jegelka, Stefanie},
  booktitle = 	 {Proceedings of the Twenty-First International Conference on Artificial Intelligence and Statistics},
  pages = 	 {1771--1780},
  year = 	 {2018},
  editor = 	 {Storkey, Amos and Perez-Cruz, Fernando},
  volume = 	 {84},
  series = 	 {Proceedings of Machine Learning Research},
  month = 	 {09--11 Apr},
  publisher =    {PMLR},
  url = 	 {https://proceedings.mlr.press/v84/alvarez-melis18a.html},
}

\appendix
\section{Asymptotic Bounds for $A$ and $|E_{\mathrm{err}}|$}
\label{sec:asymptotic_ae}
 The asymptotic behavior of the terms $\sqrt{A^2 + \alpha_{t+1}^2 K^2}$ and $|E_{\mathrm{err}}|$ can be characterized as $t \to \infty$. Let us expand the terms using Taylor series:
\begin{align*}
\frac{\alpha_{t+1}}{\alpha_t} &= \sqrt{\frac{t+\gamma}{t+1+\gamma}} = 1 - \frac{1}{2(t+\gamma)} + \mathcal{O}\left(\frac{1}{t^2}\right), \\
\beta_{t+1} &= \frac{\gamma}{t+1+\gamma} = \frac{\gamma}{t+\gamma} + \mathcal{O}\left(\frac{1}{t^2}\right).
\end{align*}
Using $A = \frac{\alpha_{t+1}}{\alpha_t} - \beta_{t+1}$, we get $A \approx 1 - \frac{\gamma + 1/2}{t+\gamma}$. Squaring this and adding $\alpha_{t+1}^2 K^2 = \frac{1}{t+1+\gamma} \approx \frac{1}{t+\gamma}$ yields:
\begin{align*}
A^2 + \alpha_{t+1}^2 K^2 &\approx 1 - \frac{2\gamma + 1}{t+\gamma} + \frac{1}{t+\gamma} = 1 - \frac{2\gamma}{t+\gamma}.
\end{align*}
Taking the square root and using the Taylor expansion gives a contraction factor of $1 - \frac{\gamma}{t+\gamma}$. Because $\gamma \ge 2$, this is less than the  bound $1-\frac{1.15}{t + \gamma + 1}$ used in our proof. 
\\

\noindent For the error term $E_{\mathrm{err}} = (1 - \frac{\alpha_{t+1}}{\alpha_t})\beta_t + \beta_{t+1} - \beta_t$, we can expand each part separately. First, using the Taylor expansion for the ratio, we have
\begin{align*}
1 - \frac{\alpha_{t+1}}{\alpha_t} &= \frac{1}{2(t+\gamma)} - \frac{3}{8(t+\gamma)^2} + \mathcal{O}\left(\frac{1}{t^3}\right).
\end{align*}
Multiplying by $\beta_t = \frac{\gamma}{t+\gamma}$ gives:
\begin{align*}
\left(1 - \frac{\alpha_{t+1}}{\alpha_t}\right)\beta_t &= \frac{\gamma}{2(t+\gamma)^2} - \frac{3\gamma}{8(t+\gamma)^3} + \mathcal{O}\left(\frac{1}{t^4}\right).
\end{align*}
Next, for the difference $\beta_{t+1} - \beta_t$, we have:
\begin{align*}
\beta_{t+1} - \beta_t = \frac{-\gamma}{(t+\gamma)(t+1+\gamma)} = -\frac{\gamma}{(t+\gamma)^2} + \frac{\gamma}{(t+\gamma)^3} + \mathcal{O}\left(\frac{1}{t^4}\right).
\end{align*}
Summing these two parts yields:
\begin{align*}
E_{\mathrm{err}} = -\frac{\gamma}{2(t+\gamma)^2} + \frac{5\gamma}{8(t+\gamma)^3} + \mathcal{O}\left(\frac{1}{t^4}\right),
\end{align*}
confirming that $|E_{\mathrm{err}}| \approx \frac{\gamma}{2t^2} = \mathcal{O}(1/t^2)$.

\end{document}